\newtheorem{theorem}{Theorem}[section]
\newtheorem{lemma}[theorem]{Lemma}
\newtheorem{corollary}[theorem]{Corollary}
\newtheorem{proposition}[theorem]{Proposition}
\theoremstyle{definition}
\theoremstyle{remark}
\newtheorem{rem}[theorem]{Remark}
\numberwithin{equation}{section}
\DeclareMathOperator{\aut}{Aut}
\DeclareMathOperator{\fix}{Fix}
\DeclareMathOperator{\irr}{Irr}
\newcommand{\bb}[1]{\mathbb{#1}}
\newcommand\Z{{\mathbb{Z}}}
\newcommand\F{{\mathbb{F}}}
\renewcommand\P{{\mathbb{P}}}
\begin{document}

\title{A decomposition of the Jacobian of a Humbert-Edge curve}


\author{Robert Auffarth}
\address{R.~Auffarth \\Departamento de Matem\'aticas, Facultad de
Ciencias, Uni\-ver\-si\-dad de Chile\\ Las Palmeras 3425, 7800024 \~Nu\~noa, Santiago, Chile}
\email{rfauffar@uchile.cl}
\thanks{R.~Auffarth was partially supported by Conicyt via Fondecyt Grant 11180965}

\author{Giancarlo Lucchini Arteche}
\address{G.~Lucchini Arteche \\Departamento de Matem\'aticas, Facultad de
Ciencias, Universidad de Chile\\ Las Palmeras 3425, 7800024 \~Nu\~noa, Santiago, Chile}\curraddr{}
\email{luco@uchile.cl}
\thanks{G.~Lucchini Arteche was partially supported by Conicyt via Fondecyt Grant 11170016 and PAI Grant 79170034}

\author{Anita M. Rojas}
\address{A.~M.~Rojas \\Departamento de Matem\'aticas, Facultad de
Ciencias, Universidad de Chile\\ Las Palmeras 3425, 7800024 \~Nu\~noa, Santiago, Chile}
\email{anirojas@uchile.cl}
\thanks{A.~M.~Rojas was partially supported by Conicyt via Fondecyt Grant 1180073}

\subjclass[2010]{Primary 14H40. Secondary 14H37, 14K02}

\date{}

\begin{abstract}
A \textit{Humbert-Edge curve of type} $n\geq 2$ is a non-degenerate smooth complete intersection of $n-1$ diagonal quadrics in $\bb P^n$. Such a curve has an interesting geometry since it has a natural action of the group $(\mathbb{Z}/2\Z)^n$. We present here a decomposition of its Jacobian variety as a product of Prym-Tyurin varieties, and we compute the kernel of the corresponding isogeny.
\end{abstract}

\maketitle

\hfill\textit{To Herbert Lange on his 75th birthday.}

\section{Introduction}

An irreducible non-singular curve $X_n\subseteq\P^n$ over an algebraically closed field of characteristic different from 2 is a \textit{Humbert-Edge curve of type} $n\geq 2$ if it is non-degenerate (i.e. not contained in a hyperplane) and is the complete intersection of $n-1$ diagonal quadrics. Multiplication by $-1$ on the $i$th coordinate of $\P^n$ restricts to an automorphism $\sigma_i$ of $X_n$, and the group generated by these involutions is a subgroup $E_n$ of $\aut(X_n)$ isomorphic to $(\mathbb{Z}/2\Z)^n$. 

Humbert-Edge curves of type $4$ were first studied by Humbert \cite{Humbert} and later generalized to arbitrary type by Edge in \cite{Edge}. They were then studied more in depth in \cite{Rubi} from a complex-analytic point of view and in \cite{Alexis} from an algebraic point of view. In particular in this last article a coarse decomposition of the Jacobian variety of a Humbert-Edge curve was given. The purpose of this paper is to give a more refined decomposition of the Jacobian variety of a Humbert-Edge curve by using the automorphism group $E_n$. Our results imply the following (see Theorems \ref{main thm} and \ref{T:PT-K} for the specific statements):

\begin{theorem}\label{T:main summary}
The Jacobian of a Humbert-Edge curve of type $n$ splits isogenously as the product of lower-dimensional abelian subvarieties where the dimension of the largest factor grows logarithmically with respect to the genus of the curve. Moreover, each of these factors is a Prym-Tyurin variety of exponent $2^{n-3}$ for the curve.
\end{theorem}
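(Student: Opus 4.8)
The plan is to exploit the group algebra decomposition of $JX_n$ induced by the action of $E_n\cong(\Z/2\Z)^n$. Since $E_n$ is abelian and each of its complex irreducible characters takes values in $\{\pm1\}$, the rational group algebra splits completely as $\Q[E_n]\cong\Q^{2^n}$, with central idempotents $e_\chi=\tfrac{1}{2^n}\sum_{g\in E_n}\chi(g)\,g$, one per character $\chi$. The standard isotypical decomposition then yields an isogeny $JX_n\sim\prod_{\chi}A_\chi$ with $A_\chi=\im(e_\chi)$ (Theorem~\ref{main thm}). I would first dispose of the trivial character: its image is $J(X_n/E_n)$, and since the squaring map $[x_0:\cdots:x_n]\mapsto[x_0^2:\cdots:x_n^2]$ turns the $n-1$ diagonal quadrics into hyperplanes, $X_n/E_n\cong\P^1$ and hence $A_{\chi_0}=0$; the product thus runs over the nontrivial characters.

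Next I would identify each factor geometrically. For nontrivial $\chi$ the subgroup $\ker\chi$ has index $2$, so $Y_\chi:=X_n/\ker\chi$ is a double cover of $X_n/E_n=\P^1$, i.e. a hyperelliptic (or rational) curve; as $\chi_0,\chi$ are the only characters trivial on $\ker\chi$, the decomposition restricts to give $A_\chi\sim JY_\chi$. The cover $X_n\to\P^1$ is branched exactly over the $n+1$ images $p_0,\dots,p_n$ of the coordinate hyperplanes $\{x_i=0\}$, and $p_i$ is a branch point of $Y_\chi\to\P^1$ precisely when $\chi(\sigma_i)=-1$. Writing $S_\chi=\{i:\chi(\sigma_i)=-1\}$, whose cardinality is even because $\sigma_0\cdots\sigma_n$ acts trivially on $\P^n$, Riemann--Hurwitz gives
\[
\dim A_\chi=g(Y_\chi)=\tfrac{1}{2}|S_\chi|-1 .
\]
Hence the nonzero factors are those with $|S_\chi|\ge4$, there are $\binom{n+1}{2k}$ of dimension $k-1$, and the largest has dimension at most $\lfloor (n-1)/2\rfloor$. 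As a consistency check,
\[
\sum_{k\ge1}\binom{n+1}{2k}(k-1)=(n+1)2^{n-2}-2^n+1=1+(n-3)2^{n-2}=g,
\]
recovering the genus coming from adjunction ($K_{X_n}=\mathcal O_{X_n}(n-3)$). Since $g=1+(n-3)2^{n-2}$ grows exponentially in $n$, we have $n=O(\log g)$, so the dimension of the largest factor grows only logarithmically in the genus.

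For the Prym--Tyurin structure I would use the norm endomorphism attached to the Galois cover $\pi_\chi\colon X_n\to Y_\chi$, whose group is $\ker\chi\cong(\Z/2\Z)^{n-1}$. Each $\sigma_i$ is an involution, hence self-adjoint for the Rosati involution of the principal polarization, so $N:=\pi_\chi^*\pi_{\chi*}=\sum_{h\in\ker\chi}h$ is a symmetric endomorphism with image $A_\chi$ satisfying $N^2=2^{n-1}N$ and $N|_{A_\chi}=2^{n-1}\cdot\mathrm{id}$; note $\tfrac18 N=2^{n-3}e_\chi$ as an endomorphism, since $e_{\chi_0}$ acts as $0$. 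This already exhibits $A_\chi$ as a Prym--Tyurin variety, and the content of the statement is that the \emph{primitive} symmetric endomorphism with image $A_\chi$ is $2^{n-3}e_\chi=\tfrac18 N$, so that the exponent is exactly $2^{n-3}$. Equivalently, via Kanev's criterion, the correspondence $D_\chi=1-2^{n-3}e_\chi$ should satisfy $(D_\chi-1)(D_\chi+2^{n-3}-1)=0$ and be induced by an effective correspondence on $X_n$.

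The hard part is this last integrality statement: one must show that $\tfrac18\sum_{h\in\ker\chi}h$ (equivalently $2^{n-3}e_\chi$) is an honest endomorphism of $JX_n$, not merely a rational one, and that no smaller multiple is, so that the factor $2^3$ separating the naive exponent $2^{n-1}$ from $2^{n-3}$ is accounted for. I would attack this by computing $H_1(X_n,\Z)$ explicitly as a $\Z[E_n]$-module from the presentation of $X_n\to\P^1$ as a branched $E_n$-cover over the $n+1$ marked points, and reading off the $2$-adic invariant factors of the action; the saturation of $\pi_\chi^*H_1(Y_\chi,\Z)$ inside $H_1(X_n,\Z)$ should contribute precisely the index $2^3$. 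The same lattice computation simultaneously yields the kernel of the isogeny $\prod_\chi A_\chi\to JX_n$ of Theorem~\ref{T:PT-K}. I expect the module structure over $\Z[E_n]$, rather than over $\Q[E_n]$, to be the genuine obstacle, since the rational decomposition above is essentially formal whereas the exponent is an integral phenomenon.
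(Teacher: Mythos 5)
Your treatment of the first sentence of the statement (the decomposition and the size of the factors) is essentially correct, but it follows a genuinely different route from the paper's. You decompose $JX_n$ by the $2^n$ characters of $E_n$ and identify each isotypical component $A_\chi$ with (the pullback of) the Jacobian of the hyperelliptic quotient $Y_\chi=X_n/\ker\chi$; this is precisely the ``cutting by hyperplanes'' decomposition of \cite{quat} that the paper's introductory remark explicitly contrasts with its own method. The paper instead argues by induction on $n$, iterating the decomposition \eqref{eq decomp alexis} of Fr\'{\i}as-Medina--Zamora and indexing the factors by subsets $T\subset S_n$ with $|T|\leq n-3$; your factors match theirs under $T\leftrightarrow\{\sigma_i:\chi(\sigma_i)=1\}$, and your dimension count $\dim A_\chi=\tfrac12|S_\chi|-1$ via Riemann--Hurwitz on $Y_\chi\to\P^1$, the tally $\binom{n+1}{2m+2}$ of factors of dimension $m$, the genus consistency check, and the logarithmic growth all agree with Theorem \ref{main thm} (the paper gets the dimension of $JX_n^-$ from the formula of \cite{ro} in Lemma \ref{Anitaslemma} instead). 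Your route gives a short, self-contained proof of this half.

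The second sentence --- that each factor is a Prym--Tyurin variety of exponent $2^{n-3}$ --- is where the proposal has a genuine gap. You correctly isolate the integrality of the primitive symmetric endomorphism as ``the hard part,'' but you only announce a plan (compute $H_1(X_n,\Z)$ as a $\Z[E_n]$-module and read off $2$-adic invariant factors) without carrying it out, so nothing in the proposal actually establishes the exponent. There is also an arithmetic slip in the setup: since $N=\sum_{h\in\ker\chi}h=2^{n-1}(e_{\chi_0}+e_\chi)$ and $e_{\chi_0}$ acts as $0$, the candidate primitive endomorphism is $\tfrac14N=2^{n-3}e_\chi$, not $\tfrac18N$, and the index separating the naive exponent $2^{n-1}$ from $2^{n-3}$ is $2^2$, not $2^3$ (test $n=3$: the exponent is $1=\tfrac14\cdot|\ker\chi|$). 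The paper closes the gap with no homology computation: it factors $\pi_\chi\colon X_n\to Y_\chi$ as the $|T|$ ramified double covers making up $X_n\to X_T$, followed by the \emph{\'etale} $(\Z/2\Z)^{2g(Y_\chi)}$-cover $X_T\to X_T/H=Y_\chi$ of Lemma \ref{lemma H} and Corollary \ref{cor H}. For the \'etale part, a counting argument with cyclic \'etale covers (Lemma \ref{lem ker}) shows $\ker(\pi_{H}^*)=JY_\chi[2]$, so $\pi_H^*$ factors through multiplication by $2$ and the induced polarization is $\deg(\pi_H)/4$ times a principal one (Proposition \ref{prop JX- is PT}); each ramified double cover then injects Jacobians and doubles the polarization type by \cite[Prop.~11.4.3 and 12.3.1]{bl}, yielding exponent $2^{n-3}$. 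The kernel of the global isogeny then comes out purely numerically from Riemann--Roch (Lemma \ref{L:kernel}). If you pursue your lattice approach you would in effect be re-proving $\ker(\pi_\chi^*)=JY_\chi[2]$; the cover-counting argument of Lemma \ref{lem ker} is the shortcut you are missing.
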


We recall that a \textit{Prym-Tyurin variety of exponent $e$ for a curve $X$} is a principally polarized abelian variety $(A,\Theta)$ such that there exists an embedding $i:A\hookrightarrow JX$ that satisfies $i^*\Theta_X\equiv e\Theta$, where $\Theta_X$ is the theta divisor of $JX$ and $\equiv$ stands for numerical equivalence.

In a sense, our result means that the Jacobian of a Humbert-Edge curve has a fine decomposition. Isogeny decompositions of Jacobians of curves have been a subject of interest ever since Ekedahl and Serre \cite{ES} found examples of curves of many genera, the largest being 1297, whose Jacobian variety splits isogenously as the product of elliptic curves. They asked if the genera of completely decomposable Jacobians are bounded, and this is a question that remains open in characteristic 0.

We obtain here that the Jacobian variety of a Humbert-Edge curve $X_n$ is decomposed as a product of subvarieties which are Prym-Tyurin varieties for $X_n$, and are actually isogenous to Jacobian varieties. We were previously unaware of any examples that showed this type of behavior.

\begin{rem}
Note that Humbert-Edge curves correspond to generalized Fermat curves of type $(2,n)$. When submitting this article, we found out that in \cite[\S5]{quat}, a decomposition of the Jacobian of a generalized Fermat curve $X$ of type $(p,n)$ for $p$ prime is given in terms of pullbacks of Jacobians of quotients $X/H$ for subgroups $H\leq \aut(X)\simeq (\Z/p\Z)^n$ of index $p$ (i.e.~hyperplanes of the $\F_p$-vector space). In the case $p=2$, this decomposition turns out to correspond to the one we obtain here. However, our approach is different: we follow an inductive procedure instead of cutting out by hyperplanes. Our point of view allows us to describe the decomposition more precisely: we give the exact number of factors and their corresponding dimensions. Moreover, we compute the kernel of the decomposition and the induced polarization on each factor, finding that all of them are Prym-Tyurin varieties. We intend to generalize our results to arbitrary generalized Fermat curves in the future.
\end{rem}

\textsc{Acknowledgements.} We were inspired to work on the Jacobian decomposition of Humbert-Edge curves by a talk given by Alexis Zamora at the Workshop \textit{Geometry at the Frontier III}, which took place in Puc\'on, Chile, November 12-16, 2018. We would like to thank the organizers for a wonderful conference. We would also like to thank the anonymous referee for his or her useful comments.

\section{Main theorem}

By \cite[Lemma 3.2]{Alexis}, a Humbert-Edge curve $X_n$ is of genus $2^{n-2}(n-3)+1$ and the quotient of $X_n$ by each $\sigma_i$ is a Humbert-Edge curve of type $n-1$. In particular, if we denote by $S_n$ the set $\{\sigma_0,\ldots,\sigma_n\}\subset\aut(X_n)$, then for each subset $T\subset S_n$ we can consider the curve $X_{T}:=X_n/\langle T\rangle$. It is a Humbert-Edge curve of type $n-|T|$ whose corresponding subgroup of automorphisms $E_T\leq\aut(X_T)$ is naturally isomorphic to $E_n/\langle T\rangle$ and generated by the images of the $\sigma_i\in S_n\smallsetminus T$, as can easily be proved by induction.\\

By \cite[Proposition 3.5]{Alexis}, the Jacobian of $X_n$ has the following de\-com\-po\-si\-tion:
\begin{equation}\label{eq decomp alexis}
JX_n\sim A\oplus JX_n^-,
\end{equation}
where $A=\sum_{i=0}^n\pi_i^* J(X/\sigma_i)$ (here $\pi_i:X_n\to X_n/\sigma_i$ is the natural projection) and $JX_n^-$ is the neutral connected component of
\[\{x\in JX_n:\sigma_i(x)=-x\text{ for all }i\}.\]
Note that in \cite{Alexis} the connectedness of this set is assumed, yet not proved.\\

Our goal is to push this decomposition further by using the previous remark on the quotients $X_T$ for $T\subset S_n$. Our main result states the following:

\begin{theorem}\label{main thm}
Let $X_n$ be a Humbert-Edge curve of type $n\geq 3$. Then we have the following decomposition of $JX_n$:
\[JX_n\sim\underset{|T|\leq n-3}{\bigoplus_{T\subset S_n}}\pi_T^*JX_{T}^{-},\]
where $\pi_T:X_n\to X_T$ is the natural projection. Moreover, if $n-|T|$ is even, then $JX_T^{-}$ is trivial; and if $n-|T|=2m+1$, then $\dim JX_{T}^{-}=m$. In particular, for $1\leq m\leq \lfloor\frac{n-1}{2}\rfloor$, there are exactly $\binom{n+1}{2m+2}$ summands of dimension $m$.
\end{theorem}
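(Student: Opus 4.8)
The plan is to prove this by induction on $n$, using the decomposition \eqref{eq decomp alexis} as the engine and the observation (from the preliminary remark) that each quotient $X_{\sigma_i} := X_n/\sigma_i$ is itself a Humbert-Edge curve of type $n-1$ with automorphism group $E_{\sigma_i} \simeq E_n/\langle\sigma_i\rangle$. First I would record the base cases: for $n=3$ the curve has genus $2^1\cdot 0+1=1$, the only relevant subsets $T$ are those with $|T|\leq 0$, i.e. $T=\emptyset$, and one checks directly that $JX_3 = JX_3^-$ is an elliptic curve, matching $\dim JX_3^-=1$ (here $m=1$). One should also verify the even case $n-|T|$ even gives $JX_T^-=0$ by a small genus count.

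\smallskip

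For the inductive step, I would apply \eqref{eq decomp alexis} to write $JX_n \sim A \oplus JX_n^-$ where $A=\sum_{i=0}^n \pi_i^* J(X_n/\sigma_i)$. The key is that each $J(X_n/\sigma_i)=JX_{\{\sigma_i\}}$ is the Jacobian of a Humbert-Edge curve of type $n-1$, to which the inductive hypothesis applies, giving
\[
J X_{\{\sigma_i\}} \sim \underset{|U|\leq n-4}{\bigoplus_{U\subset S_{n-1}}} \rho_U^* J X_{\{\sigma_i\}\cup U}^{-},
\]
where I identify subsets $U$ of the generator set of $X_{\{\sigma_i\}}$ with subsets of $S_n\smallsetminus\{\sigma_i\}$. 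Pulling back along $\pi_i$ and using functoriality of these pullbacks (composition of projections $\pi_i$ followed by $\rho_U$ equals $\pi_{\{\sigma_i\}\cup U}$ up to the relevant identifications), each summand becomes $\pi_{T}^* J X_T^-$ with $T=\{\sigma_i\}\cup U$ of size between $1$ and $n-3$. Summing over $i=0,\dots,n$ produces all terms with $1\leq |T|\leq n-3$, and adjoining the $JX_n^- = \pi_\emptyset^* JX_\emptyset^-$ term (the $T=\emptyset$ summand) yields exactly the claimed range $|T|\leq n-3$.

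\smallskip

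The dimension count is then a separate combinatorial and genus-theoretic computation. Since $X_T$ has type $n-|T|$, its genus is $2^{n-|T|-2}(n-|T|-3)+1$, and I would show by the same inductive decomposition applied to $X_T$ itself that $\dim JX_T^- = \tfrac12\bigl(\dim JX_T - \sum_i \dim J(X_T/\sigma_i)\bigr)$ collapses, after unwinding the recursion, to $0$ when $n-|T|$ is even and to $m$ when $n-|T|=2m+1$. Concretely $\dim JX_T^-$ satisfies a recursion $g(k)=g(k-1)+\dim JX^-$ where $g(k)$ is the genus in type $k$; solving $g(k)-\binom{?}{}$ type relations pins down $\dim JX_T^-=\lfloor (k-1)/2\rfloor$ contributions. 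Finally, the count of subsets $T\subset S_n$ with $|T|=n-(2m+1)$ from a set $S_n$ of size $n+1$ is $\binom{n+1}{n-2m-1}=\binom{n+1}{2m+2}$, giving the stated number of $m$-dimensional factors, with the constraint $1\leq m\leq\lfloor (n-1)/2\rfloor$ coming from $|T|\geq 0$ together with $n-|T|\geq 3$.

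\smallskip

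I expect the main obstacle to be the \emph{compatibility of the pullback maps}: one must verify that the iterated pullbacks $\pi_i^*\rho_U^*$ genuinely agree with the single pullback $\pi_T^*$ and that the images remain in ``general position'' so that the summands add up to a true isogeny decomposition rather than merely a collection of subvarieties with unaccounted-for intersections. This requires care because the decomposition \eqref{eq decomp alexis} is only an isogeny, so at the inductive step I must track that the various factors meet in finite kernels and that no factor is counted twice; establishing that the total dimensions match the genus of $X_n$ (i.e. $\sum_T \dim JX_T^- = g(X_n)$) provides the crucial consistency check that the isogeny is onto with finite kernel. The connectedness issue flagged after \eqref{eq decomp alexis} — that $JX_n^-$ is genuinely the whole set $\{x : \sigma_i x = -x\}$ and not merely its identity component — may also need to be addressed to make the dimension bookkeeping airtight.
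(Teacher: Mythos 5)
Your inductive argument for the decomposition itself is essentially the paper's: the same base case $n=3$, the same use of \eqref{eq decomp alexis}, the same substitution of the inductive hypothesis into each $J(X_n/\sigma_i)$, and the same appeal to the commutativity $\pi_T=\pi'\circ\pi_i$ (so that $\pi_i^*\rho_U^*=\pi_T^*$) to identify the summands; the multiplicity issue you flag (each non-empty $T$ arising once for each $\sigma_i\in T$, hence $|T|$ times) is exactly what the paper disposes of with its commutative diagrams, since repeated occurrences are the \emph{same} subvariety of $JX_n$ and can be erased from a sum of subvarieties.

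The genuine gap is in the dimension computation, which is a substantive half of the statement. First, the formula you propose, $\dim JX_T^{-}=\tfrac12\bigl(\dim JX_T-\sum_i\dim J(X_T/\sigma_i)\bigr)$, is false as written: the subvarieties $\pi_i^*J(X_T/\sigma_i)$ overlap heavily inside $JX_T$, so their dimensions do not subtract off additively, and the subsequent ``recursion'' $g(k)=g(k-1)+\dim JX^{-}$ is not the correct relation (the correct one coming from the theorem is $g(n)=\sum_{j=0}^{n-3}\binom{n+1}{j}\dim JX_{n-j}^{-}$). Second, and more seriously, your plan is circular: you propose to certify that the sum of subvarieties is a genuine isogeny decomposition (finite pairwise intersections) by checking that the dimensions of the factors add up to $g(X_n)$, while simultaneously proposing to \emph{derive} those dimensions from the assumption that the decomposition is an isogeny. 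One of the two must be established independently. The paper breaks this circle with Lemma \ref{Anitaslemma}: it computes $\dim JX_n^{-}$ directly as the dimension of the isotypical component $B_\rho$ of $JX_n$ for the character $\rho$ of $E_n\cong(\Z/2\Z)^n$ with $\rho(\sigma_i)=-1$, using that the action has signature $(0;2^{n+1})$ and the dimension formula of Rojas (\cite{ro}); this gives $0$ for $n$ even (where $\rho$ does not even exist, since $\prod_i\sigma_i=1$) and $\tfrac{n-1}{2}$ for $n$ odd, with no reference to the decomposition. The same group-algebra viewpoint is also what justifies directness of the sum: the distinct $\pi_T^*JX_T^{-}$ lie in isotypical components of distinct characters of $E_n$, i.e.\ are images of orthogonal central idempotents of $\Q[E_n]$. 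Without some such independent input, your proposal does not close.
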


We observe that in particular, we can always find $\binom{n+1}{4}$ elliptic curves in the decomposition and the largest factor above is of dimension $ \lfloor\frac{n-1}{2}\rfloor$. What is nice about this is that, while the genus of $X_n$ grows exponentially in $n$, the dimension of the largest factor in its Jacobian decomposition grows linearly and is therefore relatively small.

\section{Proof of the main theorem}
Note that the last statement of Theorem \ref{main thm} follows immediately from com\-bi\-na\-to\-rial considerations. Let us start then by analyzing the dimension of $JX_n^{-}$.

\begin{lemma}\label{Anitaslemma}
Let $X_n$ and $JX_n^{-}$ as above. If $n$ is even, then $JX_n^{-}$ is trivial. If $n$ is odd, the dimension of $JX_n^{-}$ is  $\frac{n-1}{2}$.
\end{lemma}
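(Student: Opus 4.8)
The plan is to translate the statement into the representation theory of $E_n$ acting on $V:=H^0(X_n,\Omega^1)$. Since $E_n$ is abelian and $JX_n^{-}$ is by definition the locus on which every $\sigma_i$ acts as $-1$, its dimension equals that of the simultaneous $(-1)$-eigenspace $V_\chi:=\{\omega\in V:\sigma_i^*\omega=-\omega\text{ for all }i\}$. The relation $\sigma_0\sigma_1\cdots\sigma_n=\mathrm{id}$ holding in $E_n$ then settles the even case at once: a differential $\omega\in V_\chi$ would satisfy $\omega=(\sigma_0\cdots\sigma_n)^*\omega=(-1)^{n+1}\omega$, forcing $\omega=0$ whenever $n$ is even. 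Hence $JX_n^{-}$ is trivial for $n$ even.

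For $n$ odd I would compute $\dim V_\chi$ as the trace of the projector $P_\chi=2^{-(n+1)}\prod_{i=0}^n(1-\sigma_i^*)$ onto the relevant eigenspace. Writing $g_J:=\prod_{j\in J}\sigma_j$ for $J\subseteq\{0,\dots,n\}$ and expanding the product gives
\[
\dim V_\chi=\frac{1}{2^{n+1}}\sum_{J\subseteq\{0,\dots,n\}}(-1)^{|J|}\,\mathrm{tr}\!\left(g_J^*\mid V\right).
\]
The two terms $J=\varnothing$ and $J=\{0,\dots,n\}$ both give $g_J=\mathrm{id}$ and contribute $g(X_n)$, while every other $g_J$ is a nontrivial involution, for which the Eichler/Riemann--Hurwitz trace formula yields $\mathrm{tr}(g_J^*\mid V)=1-\tfrac12\,\#\fix(g_J)$. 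The whole computation thus reduces to counting $\#\fix(g_J)$ and evaluating a binomial sum.

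The main obstacle is the fixed-point count, and this is where non-degeneracy of $X_n$ enters. Geometrically $g_J$ negates the coordinates indexed by $J$, so its fixed locus in $\P^n$ is the disjoint union of the coordinate subspaces $L_J=\{x_i=0:i\in J\}$ and $L_{J^c}=\{x_i=0:i\notin J\}$, whence $\fix(g_J)=(X_n\cap L_J)\sqcup(X_n\cap L_{J^c})$. Restricting the $n-1$ defining diagonal quadrics $\sum_i a_{ki}x_i^2=0$ to $L_J$ produces the linear system $\sum_{i\notin J}a_{ki}\,y_i=0$ in the squared coordinates $y_i=x_i^2$. I would invoke the non-degeneracy hypothesis (any $n-1$ of the $n+1$ coefficient columns are linearly independent) to conclude that this system admits only the zero solution when $|J^c|\le n-1$, so that $X_n\cap L_J=\varnothing$ unless $|J|=1$; in the latter case the section is a transverse complete intersection of $n-1$ quadrics in $\P^{n-1}$ and so consists of exactly $2^{n-1}$ points. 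The symmetric analysis of $L_{J^c}$ then yields $\#\fix(g_J)=2^{n-1}$ for $|J|\in\{1,n\}$ and $\#\fix(g_J)=0$ for $2\le|J|\le n-1$.

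Finally I would assemble the sum. Using $\sum_J(-1)^{|J|}=0$ to dispose of the constant terms, the only surviving fixed-point contributions come from the subsets of size $1$ and $n$, each contributing $2^{n-1}$ with sign $-1$ (recall $n$ is odd), for a total of $-(n+1)2^{n}$ in the bracket before the factor $-\tfrac12$. Substituting $g(X_n)=2^{n-2}(n-3)+1$ and simplifying collapses the expression to $\dim V_\chi=\frac{n-1}{2}$, as claimed; I have checked that this reproduces the expected values $1$ and $2$ for $n=3,5$. I expect the trace formula and the binomial bookkeeping to be routine, and the genuinely delicate point to be deducing the emptiness and transversality of the coordinate sections from the precise smoothness/non-degeneracy condition on Humbert--Edge curves.
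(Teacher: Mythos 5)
Your proof is correct, but it takes a more hands-on route than the paper. The even case is handled identically in both: the relation $\sigma_0\cdots\sigma_n=\mathrm{id}$ forces the simultaneous $(-1)$-character to exist only for odd $n$. For the odd case, the paper simply quotes the signature $(0;2^{n+1})$ of the $E_n$-action (from Fr\'{\i}as-Medina--Zamora, Thm.~3.4, plus Riemann--Hurwitz) and plugs it into the general dimension formula of \cite[Thm.~5.12]{ro}; you instead re-derive both ingredients: the fixed-point data of every element of $E_n$ directly from the diagonal quadrics, and the dimension via the explicit character projector and the Eichler trace formula. Your fixed-point analysis --- only the $\sigma_i$ (equivalently the $g_J$ with $|J|\in\{1,n\}$) have fixed points, $2^{n-1}$ apiece, everything else acting freely --- is exactly the content of the signature statement the paper cites, and your reduction to the invertibility of all $(n-1)\times(n-1)$ minors of the coefficient matrix is the right way to get it; this is the standard normal form for Humbert--Edge curves, so the ``delicate point'' you flag is already in the literature you could cite. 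Two small things to make explicit if you write this up: first, the normalization $2^{-(n+1)}$ in your projector is correct even though $|E_n|=2^n$, because each group element appears twice in the expansion of $\prod_i(1-\sigma_i^*)$ (as $g_J$ and $g_{J^c}$) with equal signs when $n$ is odd --- this deserves a sentence; second, you should say why $\dim JX_n^-$ equals the dimension of the $\chi$-eigenspace of $H^0(X_n,\Omega^1)$ rather than of $H^1(X_n,\mathcal O_{X_n})$ (harmless here since $\chi$ is real-valued, so the two eigenspaces have the same dimension). The trade-off is clear: the paper's proof is two citations long, while yours is self-contained, makes the geometry of the fixed loci visible, and amounts to re-proving the relevant special case of the Chevalley--Weil/\cite{ro} formula.
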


\begin{proof}
Since we have the action of $E_n\cong (\mathbb{Z}/2\Z)^n$ on $X_n$, we know \cite{lr} that $JX_n$ decomposes, in general, as
\[JX_n \sim \bigoplus_{\rho \in \irr_{\mathbb{Q}}(E_n)} B_{\rho},\]
where $\irr_{\mathbb{Q}}(E_n)$ is the set of rational irreducible representations of $E_n$, up to equivalence. Moreover, each subvariety $B_{\rho}$ corresponds to the image of the central idempotent $e_{\rho}$ defined by ${\rho}$ in the group algebra $\mathbb{Q}[E_n]$.

In our case, we know from \cite[Thm. 3.4]{Alexis} and the Riemann-Hurwitz formula that the action of $E_n$ on $X_n$ is of signature $(0;2^{n+1})$. That is, the total quotient $X_n/E_n$ is of genus $0$, and there are $n+1$ orbits of fixed points, each with a stabilizer of order $2$. In fact, the stabilizers are the $\langle\sigma_i\rangle$ for $i=0,\dots, n$. Furthermore, the subvariety $JX_n^{-}$ corresponds to the representation $\rho$ of $E_n$ given by $\rho(\sigma_i)=-1$ for $i=0,\dots, n$. Notice that since $\prod_i \sigma_i=1$, $\rho$ is defined only in the case of an odd $n$. It follows immediately that for an even $n$, the dimension of $JX_n^{-}$  is $0$ (see also \cite[Remark 3.6]{Alexis}).

To find the dimension of $JX_n^{-}$ for odd $n$, we apply the formula in \cite[Thm. 5.12]{ro} (see also \cite{ksir}). Following the notations therein, we have $k_{\rho}=1$, $\dim \rho = 1$, $\gamma=0$, $t=n+1$, and $\dim \fix_{G_k}\rho=0$ since $G_k=\langle \sigma_k\rangle$. This gives immediately $\dim B_{\rho}=\frac{n-1}{2}$.
\end{proof}

We are ready now to prove Theorem \ref{main thm}.

\begin{proof}[Proof of Theorem \ref{main thm}]
The statement on the dimension of $JX_n^-$ is Lemma \ref{Anitaslemma}. The proof of the decomposition follows easily by induction: For $n=3$, we have that $X_n$ is an elliptic curve and hence the decomposition \eqref{eq decomp alexis} gives us $JX_3=JX_3^{-}$ since a Humbert-Edge curve of type 2 is simply a quadric in $\P^2$, hence of genus 0. This gives the result in this case.

Assume now that the result holds for $n-1$ and let us prove it for $n$. Using once again the decomposition \eqref{eq decomp alexis} for $X_n$ we get
\[JX_n\sim JX_n^-\oplus\sum_{i=0}^n\pi_i^* J(X/\sigma_i).\]
Now, using the induction hypothesis for $n-1$ and the fact that the projection $\pi_T:X_n\to X_T$ factors naturally through $X/\sigma_i$ if $\sigma_i\in T$, we get
\[JX_n\sim JX_n^{-} \oplus\sum_{i=0}^n\underset{|T|\leq n-3}{\bigoplus_{\sigma_i\in T\subset S_n}}\pi_T^*JX_T^{-}.\]
Note that $\pi_T^*JX_T^-$ appears in the sum for \emph{every non-empty} $T$ with $|T|\leq n-3$. Considering then, for $i\neq j$ and $T\supset\{\sigma_i,\sigma_j\}$, the commutative diagrams
\[\xymatrix@=.5em{
& X_n \ar[dl]_{\pi_i} \ar[dr]^{\pi_j} \ar[dd]^{\pi_{T}} & \\
X/\sigma_i \ar[dr] & & X/\sigma_j \ar[dl] \\
& X_{T}
}\qquad
\xymatrix@=.5em{
& JX_n & \\
X/\sigma_i \ar[ur]^{\pi_i^*} & & X/\sigma_j \ar[ul]_{\pi_j^*} \\
& JX_{T} \ar[ul] \ar[ur] \ar[uu]_{\pi_{T}^*}
}\]
we see that the sum from $i=0$ to $n$ is simply making up $|T|$ copies of each $\pi_T^*JX_T^-$, which we may clearly erase then in order to get
\[JX_n\sim JX_n^{-} \oplus\underset{|T|\leq n-3}{\bigoplus_{\emptyset\neq T\subset S_n}}\pi_T^*JX_T^{-},\]
which is the desired decomposition since $JX_n^-=JX_T^-$ for $T=\emptyset$.
\end{proof}

\section{Geometric description of the decomposition of $JX_n$}
In this section, we describe the kernel of the decomposition displayed in Theorem \ref{main thm} and the induced polarization on each factor. We prove that each factor in this decomposition for $JX_n$ is a Prym-Tyurin variety of exponent $2^{n-3}$ with respect to $X_n$. This description of the induced polarization on each factor allows us to compute the kernel of the isogeny, as we show in Lemma \ref{L:kernel}. The main results in this section are collected in the following theorem:

\begin{theorem}\label{T:PT-K}
Let $X_n$ be a Humbert-Edge curve of type $n\geq 3$ and genus $g_n$ decomposed as
\[\varphi: \underset{|T|\leq n-3}{\bigoplus_{T\subset S_n}}\pi_T^*JX_{T}^{-} \to JX_n\]
Then
\begin{enumerate}
\item Each factor is a Prym-Tyurin variety of exponent $2^{n-3}$ with respect to $X_n$.

\item The kernel of the isogeny $\varphi$ is of order $(2^{n-3})^{g_n}$.
\end{enumerate}
\end{theorem}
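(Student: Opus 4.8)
The plan is to prove the two assertions separately, deriving (2) from (1). For (1) I would first identify the factors with images of idempotents. Since $E_n\cong(\Z/2\Z)^n$ has only one-dimensional rational characters, the decomposition of Lemma \ref{Anitaslemma} is precisely the group-algebra decomposition of $JX_n$ into the subvarieties $B_\chi=\im(e_\chi)$, with $e_\chi=\frac{1}{2^n}\sum_{g\in E_n}\chi(g)\,g$, as $\chi$ runs over $\irr_{\Q}(E_n)$. I would then check that the factor $\pi_T^*JX_T^-$ equals $B_{\chi_T}$, where $\chi_T$ is the character with $\chi_T(\sigma_i)=1$ for $\sigma_i\in T$ and $\chi_T(\sigma_i)=-1$ otherwise (well defined exactly when $n-|T|$ is odd, matching Theorem \ref{main thm}). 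Writing $H=\ker\chi_T$ and $Y_T=X_n/H$, the identity $e_{\chi_T}=\tfrac12(1-g_0)e_H$ for any $g_0\in E_n\smallsetminus H$ shows that $B_{\chi_T}=\pi_H^*JY_T$, where $\pi_H\colon X_n\to Y_T$ is the quotient map of degree $2^{n-1}$; moreover $Y_T$ is hyperelliptic of genus $m$, since $Y_T/\langle\bar g_0\rangle=X_n/E_n$ has genus $0$.

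The core of (1) is to compute the polarization that $\Theta_{X_n}$ induces on $B_{\chi_T}$. Using the adjunction $\widehat{\pi_H^*}\circ\phi_{\Theta_{X_n}}\circ\pi_H^*=\phi_{\Theta_{Y_T}}\circ\mathrm{Nm}_{\pi_H}\circ\pi_H^*=[\deg\pi_H]\,\phi_{\Theta_{Y_T}}$, the polarization pulled back to $JY_T$ along $\pi_H^*$ is $2^{n-1}\Theta_{Y_T}$. I would then prove the key geometric fact that $\ker(\pi_H^*)=JY_T[2]$. Granting this, $\pi_H^*$ factors through $[2]\colon JY_T\to JY_T$, which identifies $B_{\chi_T}$ with $JY_T$ in such a way that the induced polarization $M$ satisfies $[2]^*M=4M=2^{n-1}\Theta_{Y_T}$, hence $M\equiv 2^{n-3}\Theta_{Y_T}$, of type $(2^{n-3},\dots,2^{n-3})$; this is exactly the Prym-Tyurin condition of exponent $2^{n-3}$. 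The hard part will be the identity $\ker(\pi_H^*)=JY_T[2]$: since $\pi_H$ is an abelian cover with group $H\cong(\Z/2\Z)^{n-1}$, its kernel is the subgroup of $JY_T[2]$ generated by the classes of the intermediate unramified double covers of $Y_T$ inside $X_n$, i.e.\ the image of a map from $\widehat H$, and one must show that this image is all of $JY_T[2]$. I expect to establish this from the explicit hyperelliptic model of $Y_T$ (whose branch points are the images of the $n+1$ fixed orbits, with inertia the groups $\langle\sigma_i\rangle$), matching the $2$-torsion of $JY_T$ described by the branch points against the intermediate covers read off from the branch data; controlling this kernel order precisely, namely $|\ker\pi_H^*|=2^{2m}$, is the main obstacle. (Alternatively one could try to read the type off directly from the induced-polarization formulas of \cite{lr,ro}.)

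For (2), note that $\varphi$ is the sum map of the subvarieties $B_{\chi_T}$, which are mutually orthogonal for $\Theta_{X_n}$ because the symmetric idempotents $e_{\chi_T}$ are pairwise orthogonal. Hence $\varphi^*\Theta_{X_n}$ is numerically the product of the induced polarizations, so by part (1) it is of type $(2^{n-3},\dots,2^{n-3})$ with $g_n$ entries, whence $\deg\phi_{\varphi^*\Theta_{X_n}}=(2^{n-3})^{2g_n}$. On the other hand $\phi_{\varphi^*\Theta_{X_n}}=\widehat\varphi\circ\phi_{\Theta_{X_n}}\circ\varphi$ has degree $\deg(\varphi)^2=|\ker\varphi|^2$, since $\Theta_{X_n}$ is principal. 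Comparing the two expressions yields $|\ker\varphi|=(2^{n-3})^{g_n}$, as claimed.
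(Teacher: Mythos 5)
Your proposal is correct in substance but takes a genuinely different route from the paper for part (1). The paper first treats only the factor $JX_n^-$ (the case $T=\emptyset$, $n$ odd): there $H_n=\ker\chi_\emptyset$ gives an \emph{\'etale} cover $X_n\to X_n/H_n$ of degree $2^{n-1}=2^{2\dim J(X_n/H_n)}$, and a dedicated lemma (Lemma \ref{lem ker}) shows $\ker(\pi_{H_n}^*)=J(X_n/H_n)[2]$, yielding the exponent $2^{n-3}$ exactly as in your factorization through $[2]$. The general factor is then handled by induction on $n$, factoring $\pi_T$ through the ramified degree-$2$ map $X_n\to X_n/\sigma_i$ for $\sigma_i\in T$, which doubles the exponent at each step. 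You instead treat all factors uniformly and non-inductively via $Y_T=X_n/\ker\chi_T$, which is arguably more conceptual but forces you to prove $\ker(\pi_H^*)=JY_T[2]$ for a cover that is \emph{ramified} when $T\neq\emptyset$; this is precisely the step you flag as the main obstacle. It is true, and the cleanest way to close it is not the hyperelliptic bookkeeping you sketch but the observation that the maximal intermediate quotient of $X_n$ \'etale over $Y_T$ is $X_T=X_n/\langle T\rangle$, a $(\Z/2\Z)^{2m}$-cover of $Y_T$ with $m=\dim JY_T$, to which the \'etale criterion (the paper's Lemma \ref{lem ker}) applies directly; this also supplies the missing justification for your implicit claim that $\ker(\pi_H^*)$ lands in $JY_T[2]$ rather than merely in $JY_T[2^{n-1}]$ (no cyclic \'etale cover of degree $4$ can be dominated by a $(\Z/2\Z)^{2m}$-cover). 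For part (2) you compute $|\ker\varphi|$ from $\deg\phi_{\varphi^*\Theta}=(2^{n-3})^{2g_n}$ and $\deg(\widehat\varphi\circ\phi_\Theta\circ\varphi)=|\ker\varphi|^2$, whereas the paper compares self-intersection numbers $(\varphi^*\Theta)^{g_n}=\deg\varphi\cdot\Theta^{g_n}$ via Riemann--Roch; these are equivalent standard arguments, and your explicit appeal to the orthogonality of the symmetric idempotents to see that $\varphi^*\Theta$ is a product polarization is a point the paper leaves tacit.
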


We prove this theorem in several steps, keeping the notations we have defined in the previous sections. Let us start by the following remark concerning $JX_n^-$:

\begin{lemma}\label{lemma H}
Let $X_n$ be a Humbert-Edge curve of type $n$. Let $H_n\leq E_n$ be the subgroup generated by the double products $\sigma_i\sigma_j$ for $i,j\in\{0,\ldots,n\}$. Then $JX_n^-=\pi_{H_n}^*J(X_n/H_n)$, where $\pi_{H_n}$ denotes the canonical projection $X_n\to X_n/H_n$.
\end{lemma}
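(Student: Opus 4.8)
The plan is to reduce the statement to a computation of idempotents in $\Q[E_n]$, after first pinning down $H_n$ by $\F_2$-linear algebra. I would write $E_n=\F_2^{n+1}/\langle(1,\ldots,1)\rangle$, with the class of the $i$-th standard basis vector corresponding to $\sigma_i$ (the relation $\sigma_0\cdots\sigma_n=1$ is exactly $(1,\ldots,1)=0$). The generators $\sigma_i\sigma_j$ of $H_n$ are the images of the weight-two vectors $e_i+e_j$, so $H_n$ is the image of the even-weight subspace $W\subset\F_2^{n+1}$, which has dimension $n$. I would then distinguish two cases according to whether $(1,\ldots,1)\in W$: if $n$ is even the all-ones vector has odd weight and so $W\to E_n$ is injective, giving $H_n=E_n$; if $n$ is odd it lies in $W$, so $H_n$ has index $2$ in $E_n$, the quotient $E_n/H_n\cong\Z/2\Z$ being generated by the common image of all the $\sigma_i$.

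Second, I would invoke the compatibility of the pullback map with the isotypical decomposition $JX_n\sim\bigoplus_{\rho\in\irr_\Q(E_n)}B_\rho$ coming from \cite{lr}. The image of $\pi_{H_n}^*$ is, up to isogeny, the connected component of the $H_n$-invariant part of $JX_n$, that is, the image of the idempotent $p_{H_n}=\frac{1}{|H_n|}\sum_{h\in H_n}h\in\Q[E_n]$. Because $E_n$ is abelian, every $\rho$ is a character valued in $\{\pm1\}$, and $p_{H_n}e_\rho$ equals $e_\rho$ when $\rho|_{H_n}$ is trivial and $0$ otherwise. Hence
\[
\pi_{H_n}^* J(X_n/H_n)\sim\underset{\rho|_{H_n}=1}{\bigoplus_{\rho\in\irr_\Q(E_n)}}B_\rho.
\]

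Finally I would match the surviving characters. A character $\rho$ is trivial on $H_n$ exactly when it factors through $E_n/H_n$. For even $n$ this forces $\rho$ trivial, and since $X_n/H_n=X_n/E_n$ has genus $0$ both sides vanish, consistent with Lemma \ref{Anitaslemma}. For odd $n$ there are precisely two such characters: the trivial one, whose component $B_{\mathrm{triv}}\cong J(X_n/E_n)$ is again trivial because $X_n/E_n$ has genus $0$; and the character $\rho_0$ with $\rho_0(\sigma_i)=-1$ for all $i$, which satisfies $\rho_0(\sigma_i\sigma_j)=1$ and is thus trivial on $H_n$, and which is exactly the character defining $JX_n^-$ in the proof of Lemma \ref{Anitaslemma}. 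The displayed sum therefore collapses to $B_{\rho_0}=JX_n^-$, as claimed.

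The only genuinely delicate point is the first displayed identity, namely the assertion that pulling back the Jacobian of the quotient $X_n/H_n$ selects exactly the characters trivial on $H_n$. I would justify it through the standard identification $J(X_n/H_n)\sim\im(p_{H_n})$ together with the relation $p_{H_n}e_\rho=e_\rho$ or $0$ according to whether $\rho|_{H_n}$ is trivial; the remaining ingredients are the $\F_2$-linear bookkeeping of the first step and the genus-$0$ vanishing of the quotients already recorded above.
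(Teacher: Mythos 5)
Your argument is correct, and it shares its first step with the paper's proof --- both identify $\pi_{H_n}^*J(X_n/H_n)$ with the image of the averaging idempotent $p_{H_n}=\frac{1}{|H_n|}\sum_{h\in H_n}h$, i.e.\ with the neutral connected component of $(JX_n)^{H_n}$, via \cite[Prop.~5.2]{cr} --- but the two proofs then diverge. The paper finishes in one line by citing \cite[Prop.~3.5(ii)]{Alexis}, which already identifies $JX_n^-$ with (the neutral component of) the $H_n$-fixed locus; you instead complete the argument from scratch by decomposing $\im(p_{H_n})$ into isotypical components $B_\rho$ for the characters $\rho$ trivial on $H_n$, pinning down $H_n$ by $\F_2$-linear algebra (index $1$ or $2$ in $E_n$ according to the parity of $n$), and observing that the only surviving nontrivial character is $\rho_0(\sigma_i)=-1$, whose component is $JX_n^-$ by definition, the trivial component dying because $X_n/E_n$ has genus $0$. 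What your route buys is self-containedness and extra information: it simultaneously proves the index-$2$ claim that the paper relegates to a later remark, and it makes the even case transparent rather than ``troubling.'' What it costs is a little care with the distinction between $\sim$ and $=$: the lemma asserts an equality of abelian subvarieties of $JX_n$, so you should phrase the key identity as $\im(p_{H_n})=B_{\mathrm{triv}}+B_{\rho_0}$ as a sum of subvarieties (which is legitimate, since the $B_\rho$ are the images of the idempotents acting as endomorphisms of $JX_n$) and note that $B_{\mathrm{triv}}=\pi_{E_n}^*J(X_n/E_n)$ is literally zero, not merely isogenous to zero, so that the sum collapses to an honest equality $\im(p_{H_n})=B_{\rho_0}=JX_n^-$. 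With that one adjustment your proof is a complete and valid alternative to the paper's.
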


\begin{proof}
Recall \cite[Prop. 5.2]{cr}  that $\pi_{H_n}^*J(X_n/H_n)$ is the subvariety defined as the image of the idempotent $p_{H_n}=\frac{1}{|H|}\sum_{h \in H_n}h$, which is the  neutral connected component of $(JX_n)^{H_n}$. The result then follows immediately from \cite[Prop.~3.5(ii)]{Alexis}. Here once again there is an issue with connectedness, but it does not affect our result since we are taking the neutral connected component on both sides of the equality.
\end{proof}

Using Lemma \ref{Anitaslemma} and the Riemann-Hurwitz formula, we immediately get the following result:

\begin{corollary}\label{cor H}
Let $X_n$ and $H_n$ be as above and assume that $n$ is odd. Then the quotient morphism $\pi_H:X_n\to X_n/H_n$ is \'etale.
\end{corollary}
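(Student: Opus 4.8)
The plan is to deduce that $\pi_H$ is unramified from a genus count via the Riemann--Hurwitz formula, exactly as the comment preceding the statement suggests. First I would record the degree of the cover. Since $n$ is odd, the parity homomorphism $E_n\to\Z/2\Z$ sending every $\sigma_i$ to $1$ is well defined: the defining relation $\prod_{i=0}^n\sigma_i=1$ is respected precisely because $n+1$ is even. Its kernel is exactly $H_n$, the subgroup of even-length products of the $\sigma_i$, so $H_n$ has index $2$ in $E_n\cong(\Z/2\Z)^n$. Hence $|H_n|=2^{n-1}$ and $\deg\pi_H=2^{n-1}$.

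Next I would compute the genus of the quotient $X_n/H_n$. By Lemma \ref{lemma H} we have $JX_n^-=\pi_{H_n}^*J(X_n/H_n)$, and since the pullback along a connected cover is an isogeny onto its image (in particular $\pi_{H_n}^*$ has finite kernel), it follows that $\dim J(X_n/H_n)=\dim JX_n^-$. Lemma \ref{Anitaslemma} gives $\dim JX_n^-=\frac{n-1}{2}$ for odd $n$, so the genus of $X_n/H_n$ equals $\frac{n-1}{2}$.

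Then I would substitute into Riemann--Hurwitz. Using $g_n=2^{n-2}(n-3)+1$ one finds $2g_n-2=2^{n-1}(n-3)$, while on the other hand $\deg\pi_H\cdot\bigl(2g(X_n/H_n)-2\bigr)=2^{n-1}\bigl((n-1)-2\bigr)=2^{n-1}(n-3)$. As $|H_n|=2^{n-1}$ is coprime to the characteristic, the cover $\pi_H$ is tame, so Riemann--Hurwitz reads $2g_n-2=\deg\pi_H\cdot\bigl(2g(X_n/H_n)-2\bigr)+\deg R$ with $R$ the (effective) ramification divisor. The two computed quantities coincide, forcing $\deg R=0$, and a finite separable morphism of smooth curves with trivial ramification divisor is étale.

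I do not expect a serious obstacle: the only points requiring care are the identification $|H_n|=2^{n-1}$, which rests on the parity map being well defined and hence on the hypothesis that $n$ is odd, and the equality $\dim\pi_{H_n}^*J(X_n/H_n)=g(X_n/H_n)$, i.e.\ the finiteness of $\ker\pi_{H_n}^*$. As a sanity check, and arguably the conceptual reason behind the computation, one may argue directly: the only points of $X_n$ with nontrivial $E_n$-stabilizer have stabilizer $\langle\sigma_i\rangle$, and since each $\sigma_i$ is an odd element it does not belong to $H_n$; thus $H_n$ acts freely on $X_n$ and $\pi_H$ is étale without any genus bookkeeping.
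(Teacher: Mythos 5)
Your proof is correct and follows essentially the same route as the paper, which likewise deduces the corollary from Lemma \ref{Anitaslemma} (giving $g(X_n/H_n)=\frac{n-1}{2}$), the index computation $|H_n|=2^{n-1}$, and the Riemann--Hurwitz formula. Your closing direct argument --- that the only nontrivial stabilizers are the $\langle\sigma_i\rangle$ and no $\sigma_i$ lies in $H_n$, so $H_n$ acts freely --- is a clean alternative, but the main line of reasoning matches the paper's.
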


\begin{rem}
Lemma \ref{lemma H} may sound troubling in the case of even $n$ since then $JX_n^-$ is trivial. However, note that in this case $H_n$ is the whole $E_n$ and thus $X_n/H_n=X_n/E_n$. And as we saw in the proof of Lemma \ref{Anitaslemma}, $X_n/E_n$ has genus $0$, hence its Jacobian is trivial.

In the odd case, it is easy to see that $H_n$ has index 2 in $E_n$, hence $|H_n|=2^{n-1}$.
\end{rem}

\begin{rem}
Note that Lemma \ref{lemma H} and Theorem \ref{T:PT-K} imply that the Jacobian of a Humbert-Edge curve decomposes as a product of Jacobians. 
\end{rem}

We continue now with a lemma on abelian \'etale covers of curves, such as the one in the last corollary.

\begin{lemma}\label{lem ker}
Let $f:X\to Y$ be a Galois morphism between smooth projective curves with Galois group $G$ and let $f^*:JY\to JX$ be the corresponding morphism on the Jacobian varieties. Let $g$ be the dimension of $Y$ and assume that $f$ is of degree $p^{2g}$ for some prime $p$. Then the following are equivalent:
\begin{enumerate}
\item $f$ is \'etale and $G$ is isomorphic to $(\Z/p\Z)^{2g}$.
\item $f$ factors through every cyclic \'etale cover $Z\to Y$ of degree $p$.
\item $\ker f^*=JY[p]$.
\end{enumerate}
\end{lemma}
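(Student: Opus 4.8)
The plan is to prove the three equivalences by establishing a cycle of implications, $(1)\Rightarrow(2)\Rightarrow(3)\Rightarrow(1)$, while keeping in mind that the hypothesis $\deg f = p^{2g}$ is what forces everything to line up numerically. The key structural fact I would use throughout is the theory of \'etale covers via the Jacobian: cyclic \'etale covers $Z\to Y$ of degree $p$ correspond bijectively to cyclic subgroups of order $p$ in $JY[p]$ (equivalently, to nonzero elements of $JY[p]$ up to scalar, i.e.\ to surjections $\pi_1(Y)\twoheadrightarrow \Z/p\Z$, or to order-$p$ points via the Kummer-type description of the connected \'etale Galois covers). Since $\dim Y = g$, we have $JY[p]\cong(\Z/p\Z)^{2g}$, a group of order exactly $p^{2g}$, which matches the assumed degree of $f$; this numerical coincidence will be the linchpin.

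First I would prove $(1)\Rightarrow(2)$. Assuming $f$ is \'etale with $G\cong(\Z/p\Z)^{2g}$, the cover $f$ corresponds to the trivial subgroup of $JY[p]$ (or, dually, to the full $p$-torsion via the pullback), and any intermediate cyclic \'etale cover $Z\to Y$ of degree $p$ corresponds to an index-$p$ subgroup $H\leq G$. Because $G$ is an elementary abelian $p$-group of rank $2g$, \emph{every} character $G\to\Z/p\Z$ is realized, so every such $Z$ sits between $Y$ and $X$; concretely, $Z = X/\ker(\chi)$ for the corresponding character $\chi$, and the factorization $X\to Z\to Y$ is immediate. The point is that an elementary abelian $p$-group contains every degree-$p$ cyclic quotient, so no degree-$p$ \'etale cover of $Y$ can escape being dominated by $X$.

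Next, $(2)\Rightarrow(3)$. I would argue that $f^*$ kills $JY[p]$: if $f$ factors through every cyclic \'etale degree-$p$ cover $Z\to Y$, then for each such $Z$ with covering map $h:Z\to Y$, the pullback $f^*$ factors through $h^*$, and the kernel of $h^*:JY\to JZ$ is precisely the order-$p$ subgroup of $JY[p]$ classifying $Z$ (this is the standard statement that pulling back along a connected cyclic \'etale $p$-cover kills exactly the corresponding point of $JY[p]$). Taking the intersection over all such $Z$, and using that these order-$p$ subgroups together generate $JY[p]\cong(\Z/p\Z)^{2g}$, we conclude $JY[p]\subseteq\ker f^*$. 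For the reverse inclusion I would use the degree hypothesis: $\ker f^*$ is contained in $JY[\deg f]=JY[p^{2g}]$, but more sharply, a standard estimate bounds $\#\ker f^*$ by $(\deg f)^{\,\cdot}$ in a way forcing $\ker f^*$ to be annihilated by $p$; combined with $JY[p]\subseteq\ker f^*$ and $\#JY[p]=p^{2g}=\deg f$, an order count pins down $\ker f^* = JY[p]$ exactly.

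Finally, $(3)\Rightarrow(1)$. From $\ker f^* = JY[p]$ we read off that $f^*$ is annihilated modulo $p$-torsion in a way that forces $f$ to be \'etale: a ramified cover would contribute to the kernel of $f^*$ a structure incompatible with the clean $p$-torsion group $JY[p]$ (ramification produces either a positive-dimensional kernel or torsion of the wrong shape), so \'etaleness follows. To identify $G$, I would observe that $\#\ker f^* = p^{2g} = \deg f$ together with \'etaleness and the fact that $f^*$ has connected image force the Galois group to have order $p^{2g}$ and to be killed by $p$ (since $JY[p]=\ker f^*$ shows the deck transformations act trivially on the relevant $p$-torsion), whence $G$ is an elementary abelian $p$-group of rank $2g$, i.e.\ $G\cong(\Z/p\Z)^{2g}$. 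I expect the main obstacle to be the precise bookkeeping in $(2)\Rightarrow(3)$ and $(3)\Rightarrow(1)$: controlling $\#\ker f^*$ exactly (rather than up to a bounded factor) requires carefully invoking the relationship between $\ker f^*$, the ramification of $f$, and the group-theoretic structure of $G$, and making the order count airtight is where the $\deg f = p^{2g}$ hypothesis must be exploited in full.
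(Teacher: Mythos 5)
Your overall strategy is close to the paper's: both hinge on the classification of cyclic \'etale degree-$p$ covers of $Y$ by the cyclic subgroups of $JY[p]$, and on the numerical coincidence $\deg f=p^{2g}=|JY[p]|$. Your $(1)\Rightarrow(2)$ is essentially the paper's count (the $(p^{2g}-1)/(p-1)$ index-$p$ subgroups of $G$ exhaust the $(p^{2g}-1)/(p-1)$ cyclic subgroups of $JY[p]$), and the first half of $(2)\Rightarrow(3)$ is fine once ``intersection'' is replaced by the union of the kernels $\ker h^*$. But there are two genuine gaps. First, in the reverse inclusion of $(2)\Rightarrow(3)$ you leave the crucial bound unspecified (``$\#\ker f^*\le(\deg f)^{\,\cdot}$''). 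The naive bound $\ker f^*\subseteq JY[\deg f]$, i.e.\ $\#\ker f^*\le(\deg f)^{2g}$, is far too weak to close the order count; what is needed, and what the paper extracts from \cite[Prop.~11.4.3]{bl}, is the sharp bound $\#\ker f^*\le\deg f$. Without naming that input the pinning-down of $\ker f^*=JY[p]$ does not go through.

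Second, your $(3)\Rightarrow(1)$ does not work as described. The claim that ramification would produce ``a positive-dimensional kernel or torsion of the wrong shape'' in $\ker f^*$ is false: for any finite covering of smooth curves $\ker f^*$ is finite, and a ramified double cover has \emph{trivial} kernel, so ramification per se leaves no fingerprint of the kind you invoke. Likewise, $\ker f^*$ lives in $JY$, so the step from $\ker f^*=JY[p]$ to ``the deck transformations act trivially on the relevant $p$-torsion, hence $G$ is elementary abelian'' is a non sequitur. The paper instead routes this implication through (2): factoring through all $(p^{2g}-1)/(p-1)$ cyclic \'etale $p$-covers forces $G$ to have that many index-$p$ normal subgroups, which for a group of order $p^{2g}$ forces $G\cong(\Z/p\Z)^{2g}$; \'etaleness then follows by exhibiting $X$ as (a component of) the fibre product over $Y$ of $2g$ \'etale cyclic covers coming from a generating set of the character group of $G$. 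You need some version of that fibre-product argument; the direct appeal to the ``shape'' of $\ker f^*$ cannot replace it.
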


\begin{proof}
From \cite[Prop.~11.4.3]{bl} and its proof, one easily sees that a non-trivial element $\xi\in JY[p]$ is in $\ker f^*$ if and only if $f$ factors through a certain cyclic \'etale cover $X_\xi\to Y$ of degree $p$ that is uniquely defined by $\langle\xi\rangle$ up to $Y$-isomorphism. Moreover, cyclic subgroups of $JY[p]$ classify all cyclic \'etale covers of degree $p$ in this way (see the comment right after \textit{loc.~cit.}). Then (3) implies (2) and (2) implies $\ker f^*\supset JY[p]$. However, from \textit{loc.~cit.} one deduces that $|\ker f^*|$ is smaller or equal than the degree of $f$, which is $p^{2g}$, hence (3).

Assume (1). Then there are precisely $(p^{2g}-1)/(p-1)$ non-trivial quotients of $G$ isomorphic to $\Z/p\Z$. It is easy to see that each one of these induces a different cyclic \'etale cover of degree $p$ (up to $Y$-isomorphism). Then $f$ factors through all of these covers and we have (2) since this is exactly the number of cyclic subgroups of $JY[p]$.

Finally, assume (2). As we just said, there are $(p^{2g}-1)/(p-1)$ different cyclic \'etale cover of degree $p$ and $f$ factors through all of them. Hence, there are at least $(p^{2g}-1)/(p-1)$ non-trivial quotients of $G$ isomorphic to $\Z/p\Z$. Now, the only group of order $p^{2g}$ having this property is $(\Z/p\Z)^{2g}$. It is easy then to see that $f$ must be \'etale: Take for instance the fiber product of $2g$ cyclic covers coming from a generating set of $G=(\Z/p\Z)^{2g}$. Then $X$ is isomorphic to this product, which is \'etale over $Y$.
\end{proof}

These results allow us to understand the polarization induced on $JX_n^-$. Indeed:

\begin{proposition}\label{prop JX- is PT}
Let $X_n$ be a Humbert-Edge curve of type $n$ for odd $n$ and let $\Theta$ be the principal polarization of $JX_n$. Then the polarization induced by $\Theta$ on $JX_n^-$ is of type $(2^{n-3},\ldots,2^{n-3})$.
\end{proposition}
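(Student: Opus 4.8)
The plan is to reduce the computation to the étale cover furnished by Lemma \ref{lemma H} and then to track the polarization through the kernel of the pullback map. Write $Y:=X_n/H_n$ and $f:=\pi_{H_n}:X_n\to Y$, so that by Lemma \ref{lemma H} we have $JX_n^-=f^*JY=\operatorname{im}(f^*)$. Since $n$ is odd, $H_n$ is a subgroup of $E_n\cong(\Z/2\Z)^n$ of order $2^{n-1}$, hence $H_n\cong(\Z/2\Z)^{n-1}$, and $f$ is a Galois étale cover with this group by Corollary \ref{cor H}. Applying Riemann-Hurwitz to the étale degree-$2^{n-1}$ cover $f$ gives $g_Y=\frac{n-1}{2}$, so that $\dim JY=\frac{n-1}{2}=\dim JX_n^-$ (consistent with Lemma \ref{Anitaslemma}) and, crucially, $\deg f=2^{n-1}=2^{2g_Y}$. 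This is exactly the numerical situation of Lemma \ref{lem ker} with $p=2$ and $g=g_Y$: condition (1) holds, so condition (3) gives $\ker f^*=JY[2]$.

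First I would compute the polarization that $\Theta$ pulls back to on the \emph{source} $JY$. Using the norm map $\operatorname{Nm}_f:JX_n\to JY$, which is dual to $f^*$ under the principal polarizations (see \cite{bl}) and satisfies $\operatorname{Nm}_f\circ f^*=[\deg f]$, one gets that the pullback $(f^*)^*\Theta$ is numerically $(\deg f)\,\Theta_Y=2^{n-1}\Theta_Y$; equivalently, this is the standard fact that $f^*$ multiplies the cup-product form on $H^1$ by $\deg f$. In particular $(f^*)^*\Theta$ is a nondegenerate polarization on $JY$ of type $(2^{n-1},\ldots,2^{n-1})$, the non-injectivity of $f^*$ notwithstanding.

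Finally I would descend this to the image $B:=JX_n^-=\operatorname{im}(f^*)$. Factor $f^*=\iota\circ\pi$ with $\pi:JY\twoheadrightarrow B$ and $\iota:B\hookrightarrow JX_n$, and let $L:=\iota^*\Theta$ be the induced polarization on $B$, whose type we want. Then $\pi^*L=(f^*)^*\Theta\equiv 2^{n-1}\Theta_Y$. Now the key point: by the first paragraph $\ker\pi=\ker f^*=JY[2]=\ker[2]_{JY}$, so $\pi$ and $[2]_{JY}$ are two isogenies out of $JY$ with the same kernel; hence there is an isomorphism $\psi:JY\xrightarrow{\sim}B$ with $\pi=\psi\circ[2]_{JY}$. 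Pulling back and using $[2]^*M\equiv 4M$ for any polarization $M$, we obtain $4\,\psi^*L\equiv \pi^*L\equiv 2^{n-1}\Theta_Y$, whence $\psi^*L\equiv 2^{n-3}\Theta_Y$. Since $\Theta_Y$ is principal and $\psi$ is an isomorphism, $L$ is of type $(2^{n-3},\ldots,2^{n-3})$ (with $g_Y=\frac{n-1}{2}$ entries), as claimed.

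The main obstacle, and the place where the exponent $2^{n-3}$ really comes from, is this last step: the pullback form on the full lattice $H^1(Y,\Z)$ only sees the factor $2^{n-1}=\deg f$, and one must correctly account for the fact that $f^*$ is not injective --- its kernel being \emph{all} of $JY[2]$ --- to drop from $2^{n-1}$ down to $2^{n-3}$ via the relation $[2]^*\equiv[\times 4]$. The identification $\ker f^*=JY[2]$ supplied by Lemma \ref{lem ker} is therefore the linchpin; without the precise determination of this kernel one could only bound the exponent rather than compute it.
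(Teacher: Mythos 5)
Your proof is correct and follows essentially the same route as the paper: reduce to $Y=X_n/H_n$ via Lemma \ref{lemma H}, get $(\pi_{H_n}^*)^*\Theta\equiv 2^{n-1}\Theta_Y$ from the degree of the cover, use Lemma \ref{lem ker} to identify $\ker \pi_{H_n}^*=JY[2]$, and factor $\pi_{H_n}^*$ through multiplication by $2$ to divide the exponent by $4$ (the paper makes explicit the torsion-freeness of the N\'eron--Severi group at this last division, which you use implicitly). The only cosmetic difference is that you spell out the isomorphism $\psi$ identifying $JY$ with its image, where the paper writes the factorization directly as a map $\times 2:J(X_n/H_n)\twoheadrightarrow JX_n^-$.
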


\begin{proof}
Using Lemma \ref{lemma H}, we see $JX_n^-$ as $\pi_{H_n}^*J(X_n/H_n)$. Then, since $|H_n|=2^{n-1}$, we know by \cite[Lem.~12.3.1]{bl} that the polarization $(\pi_{H_n}^*)^*\Theta$ is $2^{n-1}$ times the principal polarization of $J(X_n/H_n)$. By Lemma \ref{lem ker} we have $\ker(\pi_{H_n}^*)=J(X_n/H_n)[2]$ and hence we have the following commutative diagram
\[\xymatrix{JX_n^- \ar@{^{(}->}[r]^{i} & JX_n \\ & J(X_n/H_n) \ar[u]_{\pi_{H_n}^*} \ar@{->>}[ul]^{\times 2}}\]
This immediately gives
\[2^{n-1}\Theta_{X_n/H_n}\equiv\pi_{H_n}^*\Theta\equiv 2^*i^*\Theta\equiv4i^*\Theta\]
and since the N\'eron-Severi group of an abelian variety is torsion-free, we have that $i^*\Theta\equiv 2^{n-3}\Theta_{X_n/H_n}$, and the result follows.
\end{proof}

\begin{rem}
From this proof one easily deduces that under the hypothesis (3) of Lemma \ref{lem ker}, even without assumptions on the order of $f$ or whether it is Galois, the subvariety $f^*JY\subset JX$ is a Prym-Tyurin variety of exponent $p^{2g-2}$.
\end{rem}

We are now ready to prove statement (1) in Theorem \ref{T:PT-K}.

\begin{proposition}
Let $n\geq 3$ be an integer, $X_n$ a Humbert-Edge curve of type $n$ and $T\subset S_n$ with $|S_n\smallsetminus T|>3$. Then, the induced polarization on the factor $\pi_T^* JX_T^-$ is of type $(2^{n-3},\ldots, 2^{n-3})$.
\end{proposition}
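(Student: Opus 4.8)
Write $t:=n-|T|$ for the type of the Humbert-Edge curve $X_T$; the hypothesis $|S_n\smallsetminus T|>3$ is exactly the condition $t\geq 3$, and since $JX_T^-$ is trivial for even $t$ by Lemma \ref{Anitaslemma}, the statement is only meaningful when $t$ is odd, which I assume henceforth. The plan is to transport the polarization computation already carried out on $X_T$ in Proposition \ref{prop JX- is PT} up to $JX_n$ along the pullback $\pi_T^*$. Concretely, let $i_T\colon JX_T^-\hookrightarrow JX_T$ be the inclusion and consider the composite $j:=\pi_T^*\circ i_T\colon JX_T^-\to JX_n$, whose image is the factor $Z:=\pi_T^*JX_T^-$. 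I will first compute the class $j^*\Theta_{X_n}$, and then argue that $j$ is an isomorphism onto $Z$, so that $j^*\Theta_{X_n}$ is precisely the induced polarization on $Z$ (up to the isomorphism $j$).

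For the computation, note that $\pi_T\colon X_n\to X_T$ is the quotient by $\Gamma:=\langle T\rangle\cong(\Z/2\Z)^{|T|}$ (the set $T$ is independent in $E_n$ since $|T|\leq n-4<n+1$), so $\deg\pi_T=2^{|T|}$ and \cite[Lem.~12.3.1]{bl} gives $(\pi_T^*)^*\Theta_{X_n}\equiv 2^{|T|}\Theta_{X_T}$. Pulling back along $i_T$ and using Proposition \ref{prop JX- is PT} for the odd-type curve $X_T$, which asserts that $i_T^*\Theta_{X_T}$ is of type $(2^{t-3},\dots,2^{t-3})$, I obtain
\[ j^*\Theta_{X_n}=i_T^*(\pi_T^*)^*\Theta_{X_n}\equiv 2^{|T|}\,i_T^*\Theta_{X_T}, \]
which is therefore of type $(2^{|T|+t-3},\dots,2^{|T|+t-3})=(2^{n-3},\dots,2^{n-3})$, since $|T|+t=n$.

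It remains to show that $j$ is injective, equivalently that $\ker\pi_T^*\cap JX_T^-=0$; I expect this to be the main obstacle, and in fact I will prove the stronger statement $\ker\pi_T^*=0$. By \cite[Prop.~11.4.3]{bl}, invoked as in the proof of Lemma \ref{lem ker}, any nontrivial element of $\ker\pi_T^*$ has order $2$ (the group $\Gamma$ has exponent $2$, so $\pi_T$ admits only degree-$2$ cyclic subcovers) and corresponds to a connected étale double cover $X_n/M\to X_T$, with $M\subset\Gamma$ of index $2$, through which $\pi_T$ factors. Such a cover is étale if and only if every inertia subgroup of the $\Gamma$-action on $X_n$ is contained in $M$. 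Now the stabilizers of the $E_n$-action on $X_n$ are the $\langle\sigma_i\rangle$ (the signature computation recalled in the proof of Lemma \ref{Anitaslemma}), so the nontrivial inertia subgroups of $\Gamma$ are exactly the $\langle\sigma_i\rangle$ with $\sigma_i\in T$ (for $\sigma_i\notin T$ one checks that $\sigma_i\notin\Gamma$, using $|T|\leq n-4$); as these generate $\Gamma$, no proper subgroup $M$ can contain all of them. Hence there is no such étale cover, $\ker\pi_T^*=0$, and $j$ is an isomorphism onto $Z$. Consequently the induced polarization on $Z=\pi_T^*JX_T^-$ is of type $(2^{n-3},\dots,2^{n-3})$, as claimed.
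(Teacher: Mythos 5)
Your proof is correct, but it takes a genuinely different route from the paper's. The paper argues by induction on $n$: for $\sigma_i\in T$ it factors $\pi_T$ as $X_n\xrightarrow{\pi_i}X_n/\sigma_i\to X_T$, invokes the induction hypothesis on $X_n/\sigma_i$, and then uses that $\pi_i$ is a \emph{ramified} double cover, so that $\pi_i^*$ is injective by \cite[Prop.~11.4.3]{bl} and doubles the induced polarization by \cite[Lem.~12.3.1]{bl}; the base of the induction is Proposition \ref{prop JX- is PT}. You instead collapse the whole chain into a single step, computing $(\pi_T^*)^*\Theta_{X_n}\equiv 2^{|T|}\Theta_{X_T}$ directly and then restricting via Proposition \ref{prop JX- is PT} applied to the odd-type curve $X_T$. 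The price you pay is that injectivity of $\pi_T^*$ is no longer automatic (a composition of \emph{unramified} double covers could kill $2$-torsion, as indeed happens for $\pi_{H_n}$ in Proposition \ref{prop JX- is PT}), and you supply the missing argument correctly: any order-$2$ element of $\ker\pi_T^*$ would force $\pi_T$ to factor through an étale intermediate double cover $X_n/M$, which is impossible because the inertia subgroups $\langle\sigma_i\rangle$, $\sigma_i\in T$, generate $\langle T\rangle$ and hence lie in no index-$2$ subgroup $M$. This buys you the clean standalone fact $\ker\pi_T^*=0$ (stronger than what the paper records, and of independent interest next to Lemma \ref{lem ker}), at the cost of a ramification analysis that the paper's one-involution-at-a-time induction avoids entirely. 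Two trivial slips that do not affect the argument: the hypothesis gives $|T|\leq n-3$, not $n-4$ (you only need $|T|\leq n$ for the independence of $T$ and $T\neq S_n\smallsetminus\{\sigma_i\}$ for the inertia computation, so nothing breaks); and strictly speaking a nontrivial $\ker\pi_T^*$ need only \emph{contain} an element of order $2$ rather than consist of such elements, which is all your argument requires.
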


\begin{proof}
This is easily proved by induction using Proposition \ref{prop JX- is PT}. For $n=3$, $JX_3=JX_3^-$ and it is an elliptic curve with polarization of type $(1)=(2^{3-3})$.

For $n\geq 4$, assume the result for $n-1$. If $T=\emptyset$, then the result corresponds to Proposition \ref{prop JX- is PT} when $n$ is odd and $JX_T^-$ is trivial when $n$ is even, so we need not consider this case. If $T\neq\emptyset$, consider an element $\sigma_i\in T$. Then $\pi_T$ factors as
\[X_n\xrightarrow{\pi_i} X_n/\sigma_i\xrightarrow{\pi'} X_T,\]
and hence $\pi_T^*$ factors as
\[JX_T\xrightarrow{{\pi'}^*} J(X_n/\sigma_i)\xrightarrow{\pi_i^*} JX_n.\]
By the induction hypothesis, ${\pi'}^*JX_T^-$ is a Prym-Tyurin subvariety of $J(X_n/\sigma_i)$ of exponent $2^{n-4}$. Recalling then that $\pi_i$ is of degree $2$ and ramified, it follows from \cite[Prop. 12.3.1]{bl} and \cite[Prop. 11.4.3]{bl} that the polarization on $\pi_T^*JX_n$ is of type $(2^{n-3},\ldots,2^{n-3})$.
\end{proof}

In order to prove statement (2) in Theorem \ref{T:PT-K}, we prove the following lemma. The result follows immediately then from statement (1).

\begin{lemma}\label{L:kernel}
Let $(A,\Theta)$ be a principally polarized abelian variety. Assume that there exist abelian subvarieties $A_1,\ldots, A_r$ of $A$ such that $i_j^*\Theta\equiv d\theta_j$ for a principal polarization $\theta_j$ on $A_j$, for $j=1,\ldots,r$. Denote by $\varphi:\bigoplus_{j=1}^r A_i\to A$ the isogeny given by the sum. Then 
\[|\ker \varphi|=d^{\dim A}.\]
\end{lemma}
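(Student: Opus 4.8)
The plan is to compute $|\ker\varphi|$ as the degree of the isogeny $\varphi$ and to extract that degree from the pulled-back polarization $\varphi^*\Theta$ on $\bigoplus_j A_j$. First I would use the standard fact (see \cite{bl}) that for an isogeny one has $\chi(\varphi^*\Theta)=\deg(\varphi)\cdot\chi(\Theta)$, together with $|\ker\varphi|=\deg(\varphi)$. Since $\Theta$ is a principal polarization, $\chi(\Theta)=1$, and therefore the whole problem reduces to the single identity
\[
|\ker\varphi|=\deg(\varphi)=\chi(\varphi^*\Theta).
\]
So it suffices to compute the Euler characteristic of the polarization that $\Theta$ induces on the product $\bigoplus_j A_j$ via the summation map. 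The end goal is to show this equals $\prod_j\chi(i_j^*\Theta)$, which by the hypothesis $i_j^*\Theta\equiv d\theta_j$ and the relation $\chi(d\theta_j)=d^{\dim A_j}\chi(\theta_j)=d^{\dim A_j}$ (recall $\theta_j$ is principal) would give $\prod_j d^{\dim A_j}=d^{\sum_j\dim A_j}=d^{\dim A}$, as desired.

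The hard part, and the step I expect to be the main obstacle, is establishing that $\varphi^*\Theta$ is numerically the \emph{product} polarization $\boxplus_j i_j^*\Theta$, i.e.\ that the Euler characteristic factors as $\chi(\varphi^*\Theta)=\prod_j\chi(i_j^*\Theta)$. Writing the summation map as the composite of $\prod_j i_j$ with iterated addition on $A$ and applying the theorem of the square, the class $\varphi^*\Theta$ decomposes as $\sum_j\mathrm{pr}_j^*(i_j^*\Theta)$ plus cross terms $\Theta_{jk}$ coming from the correspondences $A_j\to\hat A_k$ induced by $\phi_\Theta$. These cross terms are exactly the pairings between distinct factors with respect to $\Theta$, and they genuinely affect the Euler characteristic in general; the factorization $\chi(\varphi^*\Theta)=\prod_j\chi(i_j^*\Theta)$ holds precisely when the $A_j$ are mutually \emph{complementary}, i.e.\ pairwise orthogonal for the form attached to $\Theta$. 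This is where I would invoke the specific geometric origin of the factors in our setting: the summands $\pi_T^*JX_T^-$ are the images of the pairwise orthogonal (central) idempotents coming from the decomposition of $\Q[E_n]$ used in Lemma~\ref{Anitaslemma}, so they are mutually complementary and the cross terms vanish. Equivalently, and perhaps more transparently, one may argue on first homology: identifying $\deg(\varphi)$ with the index $[\,\Lambda_A:\varphi_*(\bigoplus_j\Lambda_{A_j})\,]$ and using that $\Theta$ gives a unimodular alternating form on $\Lambda_A$, the index equals the Pfaffian of the restricted form on the image sublattice; orthogonality of the factors makes this form block-diagonal, so the Pfaffian is the product of the blockwise Pfaffians $\mathrm{Pf}(E|_{\Lambda_{A_j}})=d^{\dim A_j}$.

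Once the product-polarization identification is in place, the remainder is routine. I would record that $\chi$ is multiplicative on products of abelian varieties, so $\chi(\varphi^*\Theta)=\prod_j\chi(i_j^*\Theta)$, and then substitute $\chi(i_j^*\Theta)=\chi(d\theta_j)=d^{\dim A_j}$ using numerical equivalence and the principality of each $\theta_j$. Multiplying over $j$ yields $\chi(\varphi^*\Theta)=d^{\dim A}$, and combining with the first paragraph gives $|\ker\varphi|=d^{\dim A}$. The only genuinely substantive input is the vanishing of the cross terms, i.e.\ the mutual complementarity of the factors; everything else is a formal manipulation of Euler characteristics and of the relation between polarization types, degrees, and kernels on abelian varieties.
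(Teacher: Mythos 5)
Your proposal is correct and follows the same overall strategy as the paper's proof: pull back $\Theta$ along $\varphi$, identify $\varphi^*\Theta$ with $d$ times the product principal polarization, and extract $\deg\varphi$ by comparing Euler characteristics (equivalently, top self-intersection numbers via Riemann--Roch). The one substantive difference is to your credit. The paper's proof opens with the bare assertion ``we know that $\varphi^*\Theta=d(\theta_1\boxtimes\cdots\boxtimes\theta_r)$,'' whereas you correctly isolate this as the only nontrivial step and observe that it requires the cross terms $A_j\to\hat A_k$ induced by $\phi_\Theta$ to vanish, i.e.\ the $A_j$ to be pairwise complementary with respect to $\Theta$. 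Your caution is warranted: with no complementarity hypothesis the lemma as literally stated is false. For instance, take $E=\C/\Z[i]$, $A=E\times E$ with the product principal polarization, $A_1=\{(x,ix)\}$ and $A_2=\{(y,y)\}$; then $i_j^*\Theta\equiv 2\theta_j$ for $j=1,2$, so $d=2$ and the lemma predicts a kernel of order $d^{\dim A}=4$, but the sum map is given by the matrix $\left(\begin{smallmatrix}1&1\\ i&1\end{smallmatrix}\right)$ and has degree $|1-i|^2=2$. Your fix --- invoking that in the application the summands $\pi_T^*JX_T^-$ are distinct isotypical components of $JX_n$ (images of pairwise orthogonal symmetric idempotents of $\Q[E_n]$), hence mutually complementary so that the cross terms vanish --- is exactly what is needed, and your alternative lattice-index/Pfaffian computation is also valid. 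The only cosmetic point is that ``precisely when'' overstates matters: complementarity \emph{suffices} for the factorization of $\chi$, which is all you use. In short, your proof is sound and, strictly speaking, more complete than the paper's own; the lemma really ought to carry the complementarity of the $A_j$ as an explicit hypothesis, which is harmless since it holds for the factors appearing in Theorem \ref{T:PT-K}.
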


\begin{proof}
We know that $\varphi^*\Theta=d(\theta_1\boxtimes \cdots \boxtimes \theta_r)$, hence
\[(\varphi^*\Theta)^{\dim A}=d^{\dim A}(\theta_1\boxtimes \cdots \boxtimes \theta_r)^{\dim A}=d^{\dim A}(\dim A)!,\]
where the last equality follows from Riemann-Roch for abelian varieties. On the other hand,
\[(\varphi^*\Theta)^{\dim A}=\deg \varphi \cdot \Theta^{\dim A}.\]
Once again by Riemann-Roch, we have $\Theta^{\dim A}=(\dim A)!$ and the result follows.
\end{proof}

\begin{rem}
Using statement (1) in Theorem \ref{T:PT-K} and \cite[Prop.~12.1.9]{bl}, we finally deduce that $JX_n^{-}$, as defined in \cite{Alexis}, is connected.
\end{rem}

\bibliographystyle{amsplain}

\end{document}